\newcommand{\G}{\mathcal{G}}
\newcommand{\V}{\mathcal{V}}
\newcommand{\BR}{\operatorname{BR}} 
\newtheorem{theorem}{Theorem}
\newtheorem{lemma}[theorem]{Lemma}
\newtheorem{example}{Example}
\newtheorem{definition}{Definition}
\title{\LARGE \bf
Convergence and consensus analysis of a class of best-response opinion dynamics
}
\author{Yuchen Xu, Yi Han$^*$, Chuanzhe Zhang, Miao Wang, Wenjun Mei
\thanks{This work was supported in part by the National Key R\&D Program of China under Grant 2022ZD0116401 and 2022ZD0116400, the National Natural Science Foundation of China under Grants 72201008 and 72131001, the Shuanghu Laboratory under Grant SH-2024JK31, Beijing Natural Science Foundation under Grant QY24049, and Peking University under The Fundamental Research Funds for the Central Universities.}
\thanks{Y. Xu, Y. Han, C. Zhang, M. Wang, and W. Mei are with the Department of Mechanics and Engineering Science, Peking University, 100871 Beijing, China.
        }%
\thanks{$^*$The corresponding author is Yi Han (han\_yi@stu.pku.edu.cn).}
}
\begin{document}

\maketitle
\thispagestyle{empty}
\pagestyle{empty}

\begin{abstract}
Opinion dynamics aims to understand how individuals’ opinions evolve through local interactions. Recently, opinion dynamics have been modeled as network games, where individuals update their opinions in order to minimize the social pressure caused by disagreeing with others. In this paper, we study a class of best response opinion dynamics introduced by Mei et al., where a parameter $\alpha > 0$ controls the marginal cost of opinion differences, bridging well-known mechanisms such as the DeGroot model ($\alpha = 2$) and the weighted-median model ($\alpha = 1$). We conduct theoretical analysis on how different values of $\alpha$ affect the system's convergence and consensus behavior. For the case when \(\alpha>1\), corresponding to increasing marginal costs, we establish the convergence of the dynamics and derive graph-theoretic conditions for consensus formation, which is proved to be similar to those in the DeGroot model. When \(\alpha<1\), we show via a counterexample that convergence is not always guaranteed, and we provide sufficient conditions for convergence and consensus. Additionally, numerical simulations on small-world networks reveal how network structure and $\alpha$ together affect opinion diversity.

\end{abstract}

\section{Introduction}

\subsection{Background and motivation}
Opinion dynamics is an interdisciplinary research area at the intersection of social science, network science, and control theory. Opinion dynamics aim to understand how individuals’ opinions evolve through social interactions. In recent years, researchers have adopted a game-theoretic perspective to construct opinion dynamics models, using cost functions to characterize the motivations behind individuals’ opinion updates. In \cite{WM-FB-GC-JH-FD:22}, Mei et al. proposed a class of opinion dynamics based on individuals’ best responses to social pressure arising from disagreement. For any individual $i$, social pressure is characterized by the following cost function: 
$$u_i(x_i) = \sum_{j=1 }^n w_{ij} \vert x_i - x_j\vert^\alpha,\quad \alpha>0.$$
Each individual updates their opinion $x_i$ by minimizing the cost function $u_i(x_i)$. Intuitively, $\alpha$ controls how individuals perceive the attractiveness of distant opinions: $\alpha>1$ suggests that distant opinions are more attractive, as the marginal cost increases with opinion distance. For $\alpha = 2$, the best-response dynamics coincide with the classic DeGroot model\cite{JRPF:56,MHDG:74}; On the other hand, $\alpha<1$ indicates that agents are more attracted by nearby opinions; $\alpha=1$ neutralizes the effect of opinion distance on opinion attractiveness and has been theoretically analyzed in \cite{WM-JMH-GC-FB-FD:24}.

In this paper, we provide both theoretical analysis and simulation studies on this class of opinion dynamics models based on individuals' best responses to social pressure. Specifically, we investigate how different values of $\alpha$ influence the convergence properties and consensus conditions of the system.

\subsection{Literature review}
 Opinion dynamics focus on modeling how interpersonal influences lead to global patterns such as consensus, polarization, or fragmentation. Early models are primarily based on rules of thumb, directly imposing rules on how individuals update their opinions. For example, the Friedkin-Johnsen (F-J) model introduces prejudice to explain the dissensus of opinions\cite{NEF-ECJ:90}; the Hegselmann-Krause (H-K) model assumes that individuals only assign weights to opinions within certain distances from their own opinions\cite{RH-UK:02,CB-CA-AP-FV:24}; the Altafini model considered the presence of negative weights in the influence network\cite{CA:13}; multi-topic opinion dynamics models consider the interdependence between different topics\cite{FNE-PAV-TR-PSE:16,BA-FA-LNE:23}. For more comprehensive and advanced surveys on opinion dynamics models, please refer to \cite{AVP-RT:17,AVP-RT:18} and \cite{BDOA-MY:19,YT-LW:23}.

Over the past decade, a game-theoretic perspective has emerged in modeling opinion evolution, offering a more interpretable and falsifiable way to characterize individuals’ behavioral motivations\cite{VJ-VVS-MI:21,ESR-BT:15,BD-TH-BT:16}. Under the game-theoretic framework, the DeGroot model can be interpreted as individuals’ best responses to the social pressure caused by disagreeing with others\cite{PG-JL-FS:14,DF-CV:19}. Bindel et al. analyzed a variant of the DeGroot model, i.e., the F-J model, and studied the efficiency of its Nash equilibrium in minimizing the total social pressure, namely, the price of anarchy (PoA) \cite{DB-JK-SO:15}.

Mei et al. \cite{WM-FB-GC-JH-FD:22} generalize the DeGroot model to a broader class of best-response dynamics driven by social pressure. In their paper, the authors point out that the reason why the DeGroot model always reaches consensus under very mild connectivity conditions lies in the fact that the attractiveness of opinions increases with opinion distance. In their model, the relationship between opinion distance and social influence is governed by a parameter $\alpha$. In \cite{WM-JMH-GC-FB-FD:24}, only the case $\alpha = 1$, i.e., the weighted-median opinion dynamic, is thoroughly analyzed, while the cases for other values of $\alpha$ remain unexplored.

\subsection{State of contribution}
In this paper, we study the class of best-response opinion dynamics models proposed in \cite{WM-FB-GC-JH-FD:22}, and analyze the impact of the exponent parameter $\alpha$, which determines the relationship between opinion distance and attractiveness, on the convergence and consensus properties of the system. Through theoretical analysis, we show that when $\alpha > 1$, the system exhibits similar convergence and consensus behavior to that of the DeGroot model. That is, we establish the convergence of dynamics and prove that strongly connectivity indicates consensus.  For $\alpha < 1$, we show that individuals’ best-response behavior resembles that in the $ \alpha = 1$ case (the weighted median model), in the sense that agents always choose their opinions from among the existing ones. We construct a counterexample to show that the dynamics does not always converge in this case, and we derive sufficient conditions under which convergence and consensus are guaranteed. Furthermore, our simulation results on small-world networks reveal that when the network structure is moderately random, increasing $\alpha$ significantly reduces the probability of consensus and increases opinion diversity.
However, as the rewiring probability increases, the system becomes more likely to achieve consensus, which in turn reduces opinion diversity and diminishes its sensitivity to $\alpha$.  Moreover, it is shown in simulations that the inertia coefficient $\beta$ has little effect on whether consensus is achieved.

\subsection{Organizations}
The remainder of this paper is structured as follows: Section II presents the mathematical formulation of our opinion dynamics model. Section III establishes theoretical results on convergence and consensus conditions for $\alpha>1$. Section IV analyses the case when $\alpha<1$. Section V provides numerical simulations illustrating the role of \( \alpha \) in shaping opinion evolution. Section VI concludes.

\section{Basic Definitions And Model Setup}

\subsection{Notations and Definitions}
Let $\subseteq$ ($\subset$ resp.) denote subset (proper subset resp.). Denote by $\mathbb{N}$ the set of natural numbers, i.e., $\mathbb{N}=\{0,1,2,\dots\}$. Let $\mathbb{Z}$ ($\mathbb{Z}_+$ resp.) be the set of (positive resp.) integers. Let $\mathbb{R}^n$ be the n-dimensional Euclidean space. Let $\mathbf{1}_n$ ($\mathbf{0}_n$ resp.) be the $n$-dimension vector whose entries are all ones (zeros resp.). For any two vectors $x,y\in \mathbb{R}^n$, let $x\leq y$ represent that $x_i\leq y_i$ for all $1\leq i\leq n$ and let $x\lneq y$ represent that $x\leq y$ and there exists $ 1\leq j\leq n$ such that $x_j<y_j$.

Consider a group of $n$ individuals labelled by $\V = \{1,\dots,n\}$. In this paper, we use the term individual, agent and node interchangably. The interaction relationships among the group are described by a directed and weighted graph $\G(W)$, where $W=(w_{ij})_{n\times n}$ is the associated adjacency matrix. Here, $w_{ij} > 0$ means that there is a link from node $i$ to node $j$ with weight $w_{ij}$, or, equivalently, individual $i$ assigns $w_{ij} > 0$ weight to individual $j$. Since nodes in $\G(W)$ represent individuals in opinion dynamics, we also call $W$ \emph{influence matrix}. Conventionally, an influence matrix $W$ is assumed to be row-stochastic.
In a digraph $\mathcal{G}$, every node with out-degree 0 is called a sink. A subgraph $\mathcal{H}$ is called a strongly connected component of $\mathcal{G}$ if $\mathcal{H}$ is strongly connected and any other subgraph of $\mathcal{G}$ strictly containing $\mathcal{H}$ is not strongly connected. The condensation digraph of $\mathcal{G}$, denoted by $\mathcal{C}(\mathcal{G})$, is defined as follows: the nodes of $\mathcal{C}(\mathcal{G})$ are the strongly connected components of $\mathcal{G}$, and there exists a directed edge in $\mathcal{C}(\mathcal{G})$ from node $\mathcal{H}_1$ to $\mathcal{H}_2$ if and only if there exists a directed edge in $\mathcal{G}$ from a node of $\mathcal{H}_1$ to a node of $\mathcal{H}_2$. 

\subsection{The best-response opinion dynamics (BROD)}

In this section, we present our game-theoretic opinion dynamics model. The central idea is that each agent updates its opinion towards the value that minimizes its perceived social cost. For each agent \(i\), given the current opinion vector \(x = (x_1, x_2, \dots, x_n)\), we define the social cost of choosing a candidate opinion \(z \in \mathbb{R}\) by
\[
u_i(z;x) = \sum_{j=1}^n w_{ij}\, |z - x_j|^\alpha,
\]
where \(\alpha > 0\) determines the sensitivity to opinion differences. To achieve a best response, the ideal update for agent \(i\) is obtained by solving
\begin{equation}\label{eq:opt-problem}
\min_{z \in \mathbb{R}} u_i(z;x).    
\end{equation}

A potential issue is that the optimization problem may yield a set of minimizers rather than a unique value when \(\alpha \leq 1\). To ensure that the update rule is well-defined, we introduce the following mapping of best response:

\begin{definition}[Best-Response Operator]\label{def:br-operator}
For any given opinion vector $x \in \mathbb{R}^n$, weight matrix $W \in \mathbb{R}^{n\times n} $, and parameter $\alpha > 0$, the best-response operator $\operatorname{BR}^\alpha(x,W)$ is defined as a mapping from $\mathbb{R}^n\times \mathbb{R}^{n\times n}$ to $\mathbb{R}^n$. In particular, let $\operatorname{BR}^\alpha_i(x,W)$ denote the $i$th component of this mapping and then $\operatorname{BR}^\alpha_i(x,W)$ is defined to satisfy the following three rules:
\begin{enumerate}
    \item Cost Minimization:\\
    $\operatorname{BR}^\alpha_i(x,W) \in \arg\min_{z \in \mathbb{R}} u_i(z;x).$
    \item Proximity Criterion:  \\
    $\vert \operatorname{BR}^\alpha_i(x,W)-x_i| \leq |z^*-x_i|,\quad \forall z^*\in\arg\min_{z \in \mathbb{R}} u_i(z;x)$
    \item Deterministic Tie-Breaking:
    If there exist distinct $z_1,z_2 \in \arg\min_{z \in \mathbb{R}} u_i(z;x)$ with 
    \[
    |z_1 - x_i| = |z_2 - x_i| = \min_{z \in \arg\min_{z \in \mathbb{R}} u_i(z;x)} |z - x_i|,
    \]
    then $\operatorname{BR}^\alpha_i(x,W)=\min \{z_1,z_2\}$
\end{enumerate}
\end{definition}

With the best-response operator \(\operatorname{BR}^\alpha(x;W)\) providing a well-defined opinion update rule for each agent minimizing his cost, we now formalize the overall opinion dynamics. In particular, we consider a synchronous update rule where all agents adjust their opinions simultaneously by incorporating both the best response and a fixed level of inertia.

\begin{definition}[Best-Response Opinion Dynamics]\label{def:opinion-dynamic}
Consider a group of $n$ individuals in an influence network associated with a row-stochastic influence matrix $W$. For any $i \in  \V$ and any $k \in \mathbb{N} $, denote by $x_i(k)$ individual $i$’s opinion at time $k$. The
best-response opinion dynamics (BROD) updates individual's opinions synchronously at each time $k$ according to
the following equation:
\begin{equation}\label{eq:dynamic}
    x(k + 1) = \beta x(k)+(1-\beta)\BR^\alpha(x(k);W),\forall i=1,\cdots,n
\end{equation}
where $\beta\in(0,1)$ is the coefficient of inertia term.
\end{definition}

The inclusion of the inertia term \(\beta\, x_i(k)\) serves two primary purposes:
\begin{enumerate}
    \item Capturing Realistic Dynamics: In real-world social systems, opinions typically evolve gradually rather than through abrupt changes. The inertia term ensures that an agent does not completely abandon its current opinion, reflecting the observed resistance to sudden changes.
    \item Promoting Diversity and Compromise: Especially when \(\alpha \leq 1\), cost minimization often leads agents to adopt existing opinions, inhibiting the generation of new opinions as discussed in Lemma~\ref{lem:no-new-opinion}. By blending the best response with the agent’s previous opinion, the inertia term facilitates smoother transitions, thereby promoting the emergence of compromise and opinion diversity.
\end{enumerate}

This model setup forms the foundation for our subsequent analysis of convergence and consensus properties, which we develop in the following sections.

\section{Analysis for the case $\alpha>1$}

In this section, we establish the convergence of BROD for $\alpha>1$ and derive a necessary and sufficient condition for the system to reach consensus under any initial states. 

\subsection{Convergence}

In order to prove the convergence of BROD, we first introduce the following definitions and lemma.

\begin{definition}[Positive function] A map $f:\mathbb{R}^n\rightarrow\mathbb{R}^n$ is said to be positive if $f(x)\in\mathbb{R}_{\ge 0}^n$ for all $x\in\mathbb{R}_{\ge 0}^n$.
\end{definition}

\begin{definition}[Type-K order-preserving] A positive map $f:\mathbb{R}^n\rightarrow\mathbb{R}^n$ is said to be type-K order-preserving if for any $x,y\in\mathbb{R}_{\ge 0}^n$ and $x\lneq y$, it holds
\begin{align*}
    (i)&\  x_i = y_i \Rightarrow f_i(x)\leq f_i(y)\\
    (ii)&\  x_i<y_i \Rightarrow f_i(x)< f_i(y)
\end{align*}
for all $i = 1,2,...,n$, where $x\lneq y \Leftrightarrow x \leq y \ \text{and} \ x \neq y$. 
\end{definition}

\begin{definition}[Sub-homogeneous] A positive map $f:\mathbb{R}^n\rightarrow\mathbb{R}^n$ is said to be sub-homogeneous if $kf(x)\leq f(kx)$, for all $x\in \mathbb{R}_{\ge 0}^n$ and $k\in [0,1]$.
    
\end{definition}

\begin{lemma}[\cite{deplano2020nonlinear},Theorem 13]\label{Lemma:cvg}
    Let a positive continuous map $f:\mathbb{R}^n\rightarrow\mathbb{R}^n$ be type-K order-preserving and sub-homogeneous. If $f$ has at least one positive fixed point in the interior of $\mathbb{R}_{\ge 0}^n$ then all periodic points are fixed points, which means 
    \begin{align*}
        \lim_{k\rightarrow \infty}f^k(x)=\bar{x},\ \ \ \forall x\in\mathbb{R}_{\ge 0}^n
    \end{align*}
    where $\bar{x}$ is a fixed point of $f$.
\end{lemma}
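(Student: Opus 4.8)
The statement is exactly Theorem~13 of \cite{deplano2020nonlinear}, so the plan is to invoke it directly; for the reader's benefit I sketch the structure of the argument one would reconstruct, which belongs to the theory of monotone, sub-homogeneous maps on the cone $\mathbb{R}_{\ge 0}^n$. The first step is to record that a type-K order-preserving map is monotone in the ordinary sense: for $x\le y$ one obtains $f_i(x)\le f_i(y)$ for every $i$ by applying clause~(i) on the coordinates where $x_i=y_i$, clause~(ii) where $x_i<y_i$, and triviality when $x=y$. Monotonicity together with sub-homogeneity is the engine of everything that follows.

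The second step is to use the interior fixed point $\bar x\gg 0$ as a gauge. For an interior point $x$ set
\[
\underline\lambda(x)=\min_{1\le i\le n}\frac{x_i}{\bar x_i},\qquad
\overline\lambda(x)=\max_{1\le i\le n}\frac{x_i}{\bar x_i},
\]
so that $\underline\lambda(x)\,\bar x\le x\le\overline\lambda(x)\,\bar x$. Applying monotonicity and then sub-homogeneity, with $k=\underline\lambda(x)$ when $\underline\lambda(x)\le 1$ and dually with $k=1/\overline\lambda(x)$ when $\overline\lambda(x)\ge 1$ (which yields $f(\overline\lambda(x)\bar x)\le\overline\lambda(x)\,f(\bar x)=\overline\lambda(x)\bar x$), one gets $\underline\lambda(f(x))\ge\min\{\underline\lambda(x),1\}$ and $\overline\lambda(f(x))\le\max\{\overline\lambda(x),1\}$. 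Hence every orbit issued from an interior point stays inside a fixed order interval, is bounded, and has a non-empty compact $\omega$-limit set; initial conditions on the boundary of $\mathbb{R}_{\ge 0}^n$ are treated by a separate approximation, dominating them above by a large multiple of $\bar x$ and passing to the limit by continuity. The remaining, and genuinely substantive, step is to show that every periodic point is a fixed point — equivalently that these $\omega$-limit sets are singletons — after which continuity identifies the limit as a fixed point and boundedness gives the stated convergence.

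I expect the elimination of nontrivial periodic orbits to be the main obstacle, and it is precisely where clause~(ii) of type-K order preservation is indispensable: without it, permutation-type behaviour is possible — e.g.\ a transposition matrix is linear, monotone and homogeneous and has the interior fixed point $\mathbf{1}_n$, yet also admits a genuine period-two orbit. The way to rule this out is to observe that $f$ is non-expansive with respect to a suitable metric on the interior of the cone (the Thompson part metric), again a consequence of monotonicity and sub-homogeneity, so that around a minimal periodic orbit consecutive distances are all equal; a finer argument using the \emph{strict} self-monotonicity of clause~(ii) then shows such an orbit cannot be nontrivial, a contradiction. Since \cite{deplano2020nonlinear} carries this analysis out in full generality, in this paper we simply invoke it as a black box.
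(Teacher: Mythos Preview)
Your proposal is correct and matches the paper's approach: the lemma is stated as a citation of Theorem~13 in \cite{deplano2020nonlinear} and is invoked as a black box, with no proof given in the paper itself. Your additional sketch of the monotone, sub-homogeneous machinery is accurate but goes beyond what the paper provides.
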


The following theorem gives a proof of the convergence of BROD.

\begin{theorem}\label{Prop:cvg}
    Consider the best-response opinion dynamics given by Definition \ref{def:opinion-dynamic} on an influence network $\mathcal{G}(W)$ for $\alpha > 1$. For any initial state $x(0)\in \mathbb{R}^n$, the solution $x(k)$ always converges to an equilibrium.
\end{theorem}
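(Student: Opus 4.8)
The plan is to write the BROD update as a single map $T:\mathbb{R}^n\to\mathbb{R}^n$, $T(x)=\beta x+(1-\beta)\BR^\alpha(x;W)$, and show that $T$ — restricted to the nonnegative orthant after a harmless translation — satisfies every hypothesis of Lemma~\ref{Lemma:cvg}. Two preliminary observations make this tractable. First, for $\alpha>1$ the cost $u_i(z;x)=\sum_j w_{ij}|z-x_j|^\alpha$ is strictly convex and coercive in $z$, so $\BR^\alpha_i(x;W)$ is the \emph{unique} minimizer (rules 2--3 of Definition~\ref{def:br-operator} are vacuous) and it lies in $[\min_j x_j,\max_j x_j]$, since moving a candidate toward the convex hull of the $x_j$ strictly lowers the cost. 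Hence $T$ maps $\mathbb{R}_{\ge 0}^n$ into itself, and its interior into itself (because $T_i(x)\ge\beta x_i$), i.e. $T$ is positive. Second, $u_i(z+c;x+c\mathbf{1})=u_i(z;x)$, so $\BR^\alpha(x+c\mathbf{1};W)=\BR^\alpha(x;W)+c\mathbf{1}$ and therefore $T(x+c\mathbf{1})=T(x)+c\mathbf{1}$ for all $c\in\mathbb{R}$.

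Next I would verify the structural hypotheses of Lemma~\ref{Lemma:cvg}. \emph{Continuity}: the unique minimizer of a jointly continuous, strictly convex, coercive family of functions depends continuously on the parameter (argue via subsequential limits of minimizers, which are bounded by the interval above, together with uniqueness), so $\BR^\alpha$ and hence $T$ are continuous. \emph{Type-K order-preserving}: for $\alpha>1$ the derivative $\partial_z u_i(z;x)=\alpha\sum_j w_{ij}|z-x_j|^{\alpha-1}\operatorname{sign}(z-x_j)$ exists and is continuous, is strictly increasing in $z$ (strict convexity), and is nonincreasing in each $x_j$; comparing the unique roots for $x\le y$ then gives $\BR^\alpha_i(x;W)\le\BR^\alpha_i(y;W)$ for every $i$. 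Adding the inertia term $\beta x_i$ yields $T_i(x)\le T_i(y)$ when $x_i=y_i$ and the strict inequality $T_i(x)<T_i(y)$ when $x_i<y_i$; here $\beta>0$ is precisely what supplies strictness, since $\BR^\alpha_i$ itself need not depend on $x_i$ (e.g. when $w_{ii}=0$). \emph{Sub-homogeneity}: substituting $z=kw$ gives $u_i(kw;kx)=k^\alpha u_i(w;x)$, so $\BR^\alpha$ is positively homogeneous of degree one and $T(kx)=kT(x)$ for $k\in[0,1]$, which in particular is sub-homogeneous. \emph{Positive interior fixed point}: any consensus vector $c\mathbf{1}$ with $c>0$ minimizes each $u_i(\cdot;c\mathbf{1})$ at $c$, so $T(c\mathbf{1})=c\mathbf{1}$.

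With all hypotheses in place, Lemma~\ref{Lemma:cvg} gives $T^k(x)\to\bar x$ for every $x\in\mathbb{R}_{\ge 0}^n$, where $\bar x$ is a fixed point of $T$ — equivalently an equilibrium of BROD, since $T(\bar x)=\bar x\iff\BR^\alpha(\bar x;W)=\bar x$. To handle an arbitrary $x(0)\in\mathbb{R}^n$, I would invoke translation invariance: choose $c$ with $x(0)+c\mathbf{1}\ge\mathbf{0}$, apply the convergence result to the shifted trajectory $x(k)+c\mathbf{1}$, and subtract $c\mathbf{1}$ from the limit to obtain convergence of $x(k)$ to an equilibrium.

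I expect the main obstacle to be the type-K order-preserving property: carrying out the root-comparison argument cleanly and, more delicately, pinning down exactly where strict inequality holds, which is what makes the inertia term $\beta>0$ indispensable in case (ii). The continuity of $\BR^\alpha$ is standard but should be argued explicitly rather than asserted, and one should note that the initial translation into $\mathbb{R}_{\ge 0}^n$ does not conflict with the ``interior'' clause of Lemma~\ref{Lemma:cvg}, whose conclusion already holds on all of $\mathbb{R}_{\ge 0}^n$.
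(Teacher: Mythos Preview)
Your proposal is correct and follows essentially the same route as the paper: verify that $T$ is continuous, positive, type-K order-preserving, sub-homogeneous, and has an interior fixed point $c\mathbf{1}$, then invoke Lemma~\ref{Lemma:cvg}. Your version is in fact slightly tighter --- you correctly observe that $T$ is exactly homogeneous of degree one (the paper's sub-homogeneity calculation contains a harmless slip), and your translation argument extending convergence from $\mathbb{R}_{\ge 0}^n$ to all of $\mathbb{R}^n$ fills a gap the paper leaves implicit.
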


\begin{proof}
    According to Lemma \ref{Lemma:cvg}, it is sufficient to prove that the update rule $f(x) \coloneqq \beta x(k)+(1-\beta)\BR^\alpha(x(k);W)$ satisfies the following five properties when $\alpha>1$: continuous, positive, type-K order-preserving, sub-homogeneous, and has at least one positive fixed point.

    First, we prove that $f$ is continuous. Since the absolute value function is a strictly convex function, $\sum_{j=1}^nw_{ij}|z-kx_j|^\alpha$ is a strictly convex function. It is observed that $BR^\alpha_i(x,W)\in[\min\limits_k(x_k),\max\limits_k(x_k)]$ for any $1\leq i\leq n$. Therefore, $\mathop{\operatorname{argmin}}\limits_z(\sum_{j=1}^nw_{ij}|z-x_j|^\alpha)$ is equal to $\mathop{\operatorname{argmin}}\limits_{z\in[\min\limits_k(x_k),\max\limits_k(x_k)]}(\sum_{j=1}^nw_{ij}|z-x_j|^\alpha)$. The Theorem 9.17 in \cite{sundaram1996first} therefore guarantees the continuous of $\BR^\alpha(x(k);W)$. As a result, $f$ is continuous when $\alpha>1$.
    
    Next, we show that $f$ is positive, which means for all $x\in\mathbb{R}_{\ge 0}^n$, $f(x)\in\mathbb{R}_{\ge 0}^n$. For any $1\leq i \leq n$,
    \begin{align*}
        f_i(x)= (1-\beta)\BR_i^\alpha(x;W)+\beta x_i
    \end{align*}
    For the first part, $\mathop{\operatorname{argmin}}\limits_x(\sum_{j=1}^nw_{ij}|x-x_i|^\alpha)\ge \mathop{\operatorname{min}}\limits_i (x_i)\ge 0$. For the second part, $x_i\ge 0$. Therefore, $f_i(x)\ge 0$, which implies that $f(x)\ge 0$ for all $x \geq 0$.

    In order to prove that $f$ is type-K order-preserving, we first prove the following conclusion: For any $1\leq i \leq n$, if $x,y\in\mathbb{R}_{\ge 0}^n$ and $x_i\neq y_i,\ x_j = y_j$ for all $j\neq i$, then $f_i(x) < f_i(y)$,\ $f_j(x)\leq f_j(y)$, for all $j\neq i$.     
    Since the sum of absolute value functions is a strictly convex function, therefore when $\alpha>1$, $\mathop{\operatorname{argmin}}\limits_z(u_k(z;x))$ is the unique solution of equation $\frac{du_k(z;x)}{dz} = 0$ for any $1\leq k \leq n$. Denote $x^*_k = \mathop{\operatorname{argmin}}\limits_z(u_k(z;x)),y^*_k=\mathop{\operatorname{argmin}}\limits_z(u_k(z;y))$, we have
    $
        \frac{d(\sum_{j=1}^nw_{kj}|z-x_j|^\alpha)}{dz}|_{z=x^*_k}=0,\ 
        \frac{d(\sum_{j=1}^nw_{kj}|z-y_j|^\alpha)}{dz}|_{z=y^*_k}=0.
    $
    For all $1\leq k\leq n$,
    \begin{align*}
        \frac{du_k(z;x)}{dz}&=\frac{d(\sum_{j=1}^nw_{kj}|z-x_i|^\alpha)}{dz}\\
        &=\frac{d(\sum_{j=1}^nw_{kj}((z-x_j)^2)^\frac{\alpha}{2}}{dz}\\
        &=2\sum_{j=1}^nw_{kj}((z-x_j)^2)^{\frac{\alpha}{2}-1}(z-x_j).
    \end{align*}
    Define $g(x)=x|x|^{\alpha-2}$. Since $g(x)$ is an increasing function when $\alpha>1$, we have
    \begin{align*}
        \frac{1}{2}\frac{du_k(z;y)}{dz}|_{z=x_k^*}=&\sum_{j=1}^nw_{kj}((x_k^*-y_j)^2)^{\frac{\alpha}{2}-1}(x_k^*-y_j)\\
        =&\sum_{j\neq i}w_{kj}((x_k^*-x_j)^2)^{\frac{\alpha}{2}-1}(x_k^*-x_j)\\
        &+w_{ki}((x_k^*-y_i)^2)^{\frac{\alpha}{2}-1}(x_k^*-y_i)\\
        =&-w_{ki}((x_k^*-x_i)^2)^{\frac{\alpha}{2}-1}(x_k^*-x_i))\\
        &+w_{ki}((x_k^*-y_i)^2)^{\frac{\alpha}{2}-1}(x_k^*-y_i))\\
        =&w_{ki}\left(g(x_k^*-y_i)-g(x_k^*-x_i)\right).
    \end{align*}
    Since $x_k^*-y_i<x_k^*-x_i$, therefore $\frac{du_k(z;y)}{dz}|_{z=x_k^*}\leq 0$, which implies that $x_k^*\leq y_k^*$, i.e.,  $\mathop{\operatorname{argmin}}\limits_z(u_k(z;x))\leq \mathop{\operatorname{argmin}}\limits_z(u_k(z;y))$ for all $1\leq k\leq n$. 
    Therefore, $f_k(x) = (1-\beta)\mathop{\operatorname{argmin}}\limits_z(u_k(z;x))+\beta x_k\leq (1-\beta)\mathop{\operatorname{argmin}}\limits_z(u_k(z;y))+\beta y_k = f_k(y)$ for all $1\leq k\leq n$. And since $x_i< y_i$, we have $f_i(x)<f_i(y)$.
    
    With the conclusion above, we may now prove that $f$ is type-K order-preserving. For any $x,y\in\mathbb{R}_{\ge 0}^n$ and $x\lneq y$, construct a vector sequence $\{x^{k}\}$ which satisfies $x^0=x,x^{n}=y$, and the first $n-k$ components of $x^{k}$ are the same as the first $k$ components of $x$, the last $k$ components of $x^{k}$ are the same as the last $n-k$ components of $y$, for all $0 \leq k \leq n$. Therefore we have $f(x)=f(x^0)\leq f(x^1)\leq\cdots\leq f(x^n)=f(y)$, and if $x_i<y_i$ for some $1\leq i\leq n$, $f_i(x^{(n-i)})<f_i(x^{(n-i+1)})$. Therefore, $f$ is a type-K order-preserving function.

    Then, we show that $f$ is sub-homogeneous. For all $x\in \mathbb{R}_{\ge 0}^n$, $1\leq i\leq n$ and $k\in [0,1]$,
    \begin{align*}
    f_i(kx) &= (1-\beta) \mathop{\operatorname{argmin}}_z(\sum_{j=1}^nw_{ij}|z-kx_j|^\alpha)+\beta kx_i \\
    &= (1-\beta)k^\alpha \mathop{\operatorname{argmin}}_z(\sum_{j=1}^nw_{ij}|z-x_j|^\alpha)+\beta kx_i\\
    &\ge k((1-\beta) \mathop{\operatorname{argmin}}_z(\sum_{j=1}^nw_{ij}|z-x_j|^\alpha)+\beta x_i)\\
    &=kf_i(x),
    \end{align*}
    which means that $f(kx)\ge kf(x)$.
    
    Finally, since $f(c\mathbf{1}_n)=c\mathbf{1}_n$ for all $c>0$, with all five properties satisfied, the convergence of the system is thus guaranteed by Lemma \ref{Lemma:cvg}.
\end{proof}

\subsection{Consensus}
In this section, we provide a necessary and sufficient condition for BROD to reach consensus. 

\begin{theorem}\label{Prop:css} Consider the best-response opinion dynamics given by Definition \ref{def:opinion-dynamic} on an influence network $\mathcal{G}(W)$ for $\alpha > 1$. The solution $x(k)$ converges to a consensus equilibrium for any initial state $x(0) \in \mathbb{R}^n$ if and only if graph $\mathcal{G}$ has a unique globally-reachable strongly-connected component, i.e., the condensation digraph $\mathcal{C}(\mathcal{G})$ contains a unique (globally-reachable) sink.
\end{theorem}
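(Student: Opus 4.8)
The plan is to prove the two implications separately, and to reduce the harder (sufficiency) direction to a statement purely about equilibria by invoking Theorem~\ref{Prop:cvg}. By that theorem every trajectory converges to an equilibrium $\bar x$, i.e.\ a fixed point of the update map; since $\beta\in(0,1)$ this is equivalent to $\bar x = \BR^\alpha(\bar x;W)$. For $\alpha>1$ the cost $u_i(\cdot;\bar x)$ is strictly convex and $C^1$ (as already established in the proof of Theorem~\ref{Prop:cvg}), so $\bar x_i$ is its unique minimizer, characterised by the first-order condition $\sum_{j=1}^n w_{ij}\,g(\bar x_i-\bar x_j)=0$ with $g(t)=t|t|^{\alpha-2}$ strictly increasing. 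Hence ``$x(k)$ reaches consensus for every $x(0)$'' is equivalent to ``every equilibrium of BROD has the form $c\mathbf{1}_n$'', and it is this reformulation I would work with.

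For sufficiency, assume $\mathcal{C}(\mathcal{G})$ has a unique sink $\mathcal{H}^*$; being the unique sink of a finite acyclic digraph, $\mathcal{H}^*$ is reachable from every node. Let $\bar x$ be an equilibrium, put $M=\max_i \bar x_i$ and $S=\{\,i:\bar x_i=M\,\}$. For $i\in S$ every summand $g(\bar x_i-\bar x_j)=g(M-\bar x_j)$ in the first-order condition is nonnegative, and vanishes only when $\bar x_j=M$; since the weighted sum is zero, $w_{ij}>0$ forces $j\in S$, so $S$ is closed under out-neighbours. A nonempty out-closed vertex set must contain $\mathcal{H}^*$: follow a path from any node of $S$ into $\mathcal{H}^*$ (it exists by global reachability), all of whose nodes stay in $S$, then invoke strong connectivity of $\mathcal{H}^*$ to get $\mathcal{H}^*\subseteq S$. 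The same argument with $m=\min_i\bar x_i$ and $T=\{\,i:\bar x_i=m\,\}$ gives $\mathcal{H}^*\subseteq T$. Therefore $\mathcal{H}^*\subseteq S\cap T$, which forces $M=m$ and $\bar x=c\mathbf{1}_n$.

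For necessity I would argue by contraposition. If $\mathcal{C}(\mathcal{G})$ does not have a unique sink, it has at least two distinct sinks $\mathcal{H}_1\neq\mathcal{H}_2$, and each $\mathcal{H}_\ell$ is closed under out-neighbours, so row-stochasticity gives $\sum_{j\in\mathcal{H}_\ell} w_{ij}=1$ for $i\in\mathcal{H}_\ell$. Take $x(0)$ with $x_i(0)=0$ for $i\in\mathcal{H}_1$ and $x_i(0)=1$ for all other $i$. For $i\in\mathcal{H}_1$ the cost $u_i(z;x(0))=|z|^\alpha$ is uniquely minimised at $0$, so $\BR^\alpha_i(x(0);W)=0$ and $x_i(1)=0$; inductively $x_i(k)=0$ for all $k$ and all $i\in\mathcal{H}_1$, while the identical reasoning gives $x_i(k)=1$ for all $i\in\mathcal{H}_2$. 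Thus the trajectory never reaches consensus, completing the equivalence.

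The main obstacle is the sufficiency direction, and within it the claim that the extremal level sets $S$ and $T$ are closed under out-neighbours: this is exactly where $\alpha>1$ enters, through strict monotonicity of $g$ and hence the sharp first-order characterisation of equilibria (the argument genuinely fails at $\alpha=1$). The supporting graph-theoretic fact that every nonempty out-closed vertex set contains the unique sink strongly connected component, and the reduction to equilibria via Theorem~\ref{Prop:cvg}, are then routine.
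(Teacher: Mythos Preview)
Your proposal is correct and rests on the same core observation as the paper: at an equilibrium the first-order condition $\sum_j w_{ij}\,g(\bar x_i-\bar x_j)=0$, together with the strict monotonicity of $g$ for $\alpha>1$, forces the level sets of the extreme values to be closed under out-neighbours. The necessity argument is also essentially the same (initialise the distinct sinks at distinct constants and observe they are invariant).

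Where you differ is in how you exploit the out-closedness. The paper proceeds by induction on the condensation digraph, ordered by the ``major length'' (longest path) to the unique sink $\mathcal{G}_1$: it first shows the nodes of $\mathcal{G}_1$ agree at some value $\hat{x}_0$, and then inductively argues, for each strongly connected component at distance $d$, that its maximum and minimum are sandwiched by $\hat{x}_0$. You bypass this induction entirely with a single global step: the max- and min-level sets $S$ and $T$ are out-closed, any nonempty out-closed set contains the unique sink component, hence $\mathcal{H}^*\subseteq S\cap T$ and $M=m$. This is a genuine simplification: you never need to stratify the condensation or track the consensus value layer by layer. The paper's inductive scheme does buy a slightly more detailed picture (it identifies that every node converges to the sink's common value $\hat{x}_0$), but since $M=m$ already gives that for free, your route loses nothing.
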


\begin{proof}
For the simplicity of terminology, we denote the length of the longest path from node $i$ to node $j$ in an acyclic digraph as node $i$'s ``major length'' to node $j$, analogous to the notion of the "major arc" on a circle. Since the convergence of the system has been guaranteed by Theorem \ref{Prop:cvg}, we may define $x^*=(x_1^*,x_2^*,\cdots,x_n^*)=\mathop{\lim}\limits_{k\rightarrow\infty}x(k)$.

Regarding the necessary part, partition graph $\mathcal{G}$ into several disjoint strongly-connected components $\mathcal{G}_1,\mathcal{G}_2,\cdots,\mathcal{G}_s$, where $\mathcal{G}_1$ is the only globally-reachable one. Since $\mathcal{G}_1,\mathcal{G}_2,\cdots,\mathcal{G}_s$ are regarded as nodes in the corresponding condensation digraph $\mathcal{C}(\mathcal{G})$, by arranging them in ascending order based on their major length to the unique sink $\mathcal{G}_1$, we can inductively prove that the system will reach consensus.

Start with the globally-reachable component $\mathcal{G}_1$. Denote $M=\mathop{\max}\limits_{i\in\mathcal{V}_1}(x_i^*)$, $m=\mathop{\min}\limits_{i\in\mathcal{V}_1}(x_i^*)$. 
Proof by contradiction. Assume that $m<M$, then there exists $i_\infty, j_0 \in \mathcal{V}_1$ such that $w_{i_\infty j_0}>0$ and $\mathop{\lim}\limits_{k\rightarrow\infty}x_{i_\infty}(k)=M$, $\mathop{\lim}\limits_{k\rightarrow\infty}x_{j_0}(k)<M$. Since $x_j^*\leq M$ for all $j \in \mathcal{V}_1$ and $x_{j_0}^*<M$, we have 
\begin{align*}
    \mathop{\operatorname{argmin}}_z(\sum_{j=1}^n w_{i_\infty j}|z-x_j^*|^\alpha)<M,
\end{align*}
Therefore, 
\begin{align*}
    f_{i_\infty}(x^*)<M = x_{i_\infty}^*,
\end{align*}
which leads to a contradiction since $x^*$ should be an equilibrium point of $f(x)$. Therefore, the opinions of nodes in $\mathcal{G}_1$ (strongly-connected components whose major length to sink $\mathcal{G}_1$ in $\mathcal{C}(\mathcal{G})$ is 0) will converge to a consensus equilibrium for any initial state $x(0)$. Denote by $\hat{x}_0 \in \mathbb{R}$ the consensus opinion of nodes in $\mathcal{G}_1$.

Then suppose that the opinions of nodes in any strongly-connected component whose major length to $\mathcal{G}_1$ in $\mathcal{C}(\mathcal{G})$ is less than $d$ will converge to a consensus equilibrium, for any $d \geq 1,\ d \in \mathbb{Z}$. Let $\mathcal{G}_{j_d}$ be any strongly-connected component whose major length to $\mathcal{G}_1$ in $\mathcal{C}(\mathcal{G})$ is $d$. Similarly, denote $M_{\mathcal{G}_{j_d}}=\max\limits_{i\in\mathcal{V}_{j_d}}(x_i^*), m_{\mathcal{G}_{j_d}}=\min\limits_{i\in\mathcal{V}_{j_d}}(x_i^*)$. 
Since that nodes in $\mathcal{G}_{j_d}$ interact only among themselves and with nodes from components whose major length to $\mathcal{G}_1$ is less than $d$, i.e., with nodes whose opinions will converge to $\hat{x}_0$. Therefore, by applying the inductive hypothesis and employing a contradiction argument similar to the proof on $\mathcal{G}_1$, we can prove that $M_{\mathcal{G}_{j_d}}\leq \hat{x}_0$ and $m_{\mathcal{G}_{j_d}}\ge \hat{x}_0$, which means $M_{\mathcal{G}_{j_d}}=m_{\mathcal{G}_{j_d}}=\hat{x}_0$. Therefore, the opinions of nodes in $\mathcal{G}_{j_d}$ will converge to a consensus equilibrium for any initial state $x(0)$, thereby concluding the inductive proof.

Regarding the sufficient part, assume that the dynamic process converges to consensus. Proof by contradiction. If there exist more than one globally-reachable strongly-connected components, that is to say, the condensation digraph $\mathcal{C}(\mathcal{G})$ contains more than two sinks, set the initial opinions such that the nodes in the same sink share the same opinions, while nodes in different sinks hold distinct opinions. Under this condition, the opinions of nodes in each globally-reachable strongly-connected component converge, but they do not reach consensus, thereby leads to a contradiction. Therefore, graph $\mathcal{G}$ can only have one unique globally-reachable strongly-connected component.

\end{proof}

\section{Analysis for the case $\alpha<1$}

In this section, we first show that when \(\alpha < 1\), each agent’s best response is selected among the existing opinions. We then construct a counterexample to demonstrate that the system do not always converge. Finally, we provide a sufficient condition under which the system achieves convergence and consensus.

\subsection{Property of the Best-Response Operator}

For $\alpha < 1$, the best-response operator behaves differently from the case \(\alpha > 1\). In particular, for any node $i$, $\BR^\alpha_i(x;W)$ is restricted to the existing values of opinions. The following lemma formalizes this property.
\smallskip

\begin{lemma}\label{lem:no-new-opinion} When $0<\alpha\leq1$, it holds that 
\begin{align*}
    \BR^\alpha_i(x;W)\in \{x_j\vert j=1,\cdots,n\},\forall i=1,\cdots,n.
\end{align*}
\end{lemma}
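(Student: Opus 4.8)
The plan is to exploit the concavity of $t\mapsto|t|^\alpha$ for $\alpha\le 1$: on any interval that contains no opinion value in its interior, every summand $|z-x_j|^\alpha$ is concave in $z$, hence so is $u_i(\cdot\,;x)$, and a concave function on an interval cannot attain its minimum in the interior unless it is constant there. As a preliminary step I would note, exactly as in the proof of Theorem~\ref{Prop:cvg}, that every minimizer of $u_i(\cdot\,;x)$ lies in $[m,M]$ with $m:=\min_j x_j$, $M:=\max_j x_j$: if $z<m$ then for each $j$ with $w_{ij}>0$ one has $|z-x_j|^\alpha=(x_j-z)^\alpha>(x_j-m)^\alpha=|m-x_j|^\alpha$ since $t\mapsto t^\alpha$ is strictly increasing on $[0,\infty)$, and at least one such $j$ exists because $W$ is row-stochastic; so $u_i(z;x)>u_i(m;x)$, and symmetrically for $z>M$. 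In particular $z^\dagger:=\BR^\alpha_i(x;W)\in[m,M]$.

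Next I would argue by contradiction: suppose $z^\dagger\notin\{x_1,\dots,x_n\}$. Then $m<z^\dagger<M$, so $a:=\max\{x_j:x_j<z^\dagger\}$ and $b:=\min\{x_j:x_j>z^\dagger\}$ are well defined, and the open interval $(a,b)$ contains no opinion value. On $[a,b]$ each term $z\mapsto|z-x_j|^\alpha$ equals $(z-x_j)^\alpha$ or $(x_j-z)^\alpha$ with no change of sign, hence is concave (because $t\mapsto t^\alpha$ is concave on $[0,\infty)$ for $\alpha\le 1$), so $u_i(\cdot\,;x)$ is concave on $[a,b]$. Since $z^\dagger$ minimizes $u_i(\cdot\,;x)$ over all of $\mathbb{R}$ and lies in the interior of $[a,b]$, the standard fact that a concave function with an interior minimizer over an interval is constant on that interval gives $u_i(\cdot\,;x)\equiv u_i(z^\dagger;x)$ on $[a,b]$.

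Then I would split on the value of $\alpha$. If $\alpha<1$, the map $t\mapsto t^\alpha$ is \emph{strictly} concave on $[0,\infty)$, so (again using that some $w_{ij}>0$) $u_i(\cdot\,;x)$ is strictly concave on $[a,b]$, contradicting the constancy just derived; hence $z^\dagger\in\{x_1,\dots,x_n\}$, which is the claim, since $z^\dagger\in\arg\min_z u_i(z;x)$ by rule~1 of Definition~\ref{def:br-operator}. If $\alpha=1$, constancy on $[a,b]$ shows that both $a$ and $b$ are minimizers of $u_i(\cdot\,;x)$; because $x_i$ is itself an opinion value it cannot lie in the open interval $(a,b)$, so whichever of $a,b$ lies on the same side of $z^\dagger$ as $x_i$ is a minimizer strictly closer to $x_i$ than $z^\dagger$ is, contradicting the Proximity Criterion (rule~2 of Definition~\ref{def:br-operator}). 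In either case $z^\dagger\in\{x_1,\dots,x_n\}$, completing the proof.

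The routine ingredients — concavity of $|z-x_j|^\alpha$ on a sign-definite interval, and ``a concave function attaining an interior minimum over an interval is constant there'' — I would simply state. The genuinely delicate point is the endpoint $\alpha=1$: there $u_i(\cdot\,;x)$ is only concave, not strictly concave, the set $\arg\min_z u_i(z;x)$ may be a nondegenerate subinterval spanning two consecutive opinion values, and one must appeal to the proximity (and, if a tie occurs, the tie-breaking) rule rather than to rule~1 alone. Getting this interaction between the set-valued $\arg\min$ and the single-valued operator $\BR^\alpha_i$ right is the main thing I would be careful about.
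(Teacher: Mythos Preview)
Your argument is correct and follows essentially the same route as the paper: both proofs show that $u_i(\cdot\,;x)$ is concave on each interval between consecutive opinion values, so the minimum over that interval is attained at an endpoint. Your treatment is in fact more careful at the boundary case $\alpha=1$, where the paper's second-derivative computation only yields $f''\le 0$ and the $\arg\min$ may be a full subinterval; your use of the Proximity Criterion to rule out interior points in that case is exactly the right fix.
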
  
\smallskip
    
\begin{proof}
    To analyze the optimization problem in Equation~\ref{eq:opt-problem}, consider the states $x$ sorted as:
\[
x^{(1)} \leq x^{(2)} \leq \cdots \leq x^{(n)}.
\]
The domain of the objective function can then be divided into intervals \([x^{(i)}, x^{(i+1)}]\) for \(i = 1, \ldots, n-1\).

Within each interval, the absolute value terms \(|z - x_j(k)|^\alpha\) can be simplified to remove the absolute value sign, leading to a piecewise function. The first derivative of the objective function in an interval is:
\[
f'(z) = \alpha \sum_{j: x_j \leq z} w_{ij} (z - x_j)^{\alpha - 1} - \alpha \sum_{j: x_j > z} w_{ij} (x_j - z)^{\alpha - 1}.
\]
The second derivative is:
\begin{align*}
\frac{1}{\alpha (\alpha - 1)}f''(z) =  & \sum_{j: x_j \leq z} w_{ij} (z - x_j(k))^{\alpha - 2} \\
&+ \sum_{j: x_j > z} w_{ij} (x_j(k) - z)^{\alpha - 2}.
\end{align*}
Since \(0 < \alpha < 1\), we have \(\alpha - 1 < 0\), implying \(f''(z) < 0\). Thus, \(f(z)\) is concave within each interval.

Given that \(f(z)\) is concave within each interval, the minimum of \(f(z)\) must occur at the boundaries of the interval. Therefore, the optimal solution $\BR^\alpha_i(x;W)$ is always one of the elements in the set $\{x_j\vert j=1,\cdots,n\}$.

\end{proof}

\subsection{Sufficient Conditions for Convergence and Consensus}
In this section we propose sufficient conditions for the convergence and consensus of BROD when $\alpha<1$. Before proceeding, we first provide a counterexample where the system does not converge, despite the presence of inertia.

\begin{example}\label{example:not-converge}
Consider a system with \( n = 6 \) agents and 3 distinct opinions. The initial opinion vector is $x(0) = (0, 3, 3 , 0 , 2 , 3) $ and the interaction weight matrix \( W \) is given by:
\[
W = \begin{bmatrix}
0 & 0 & 0.1 & 0.4 & 0.3 & 0.2 \\
0.2 & 0 & 0 & 0.1 & 0.25 & 0.45 \\
0 & 0.05 & 0 & 0.45 & 0.1 & 0.4 \\
0 & 0 & 0 & 1 & 0 & 0 \\
0 & 0 & 0 & 0 & 1 & 0 \\
0 & 0 & 0 & 0 & 0 & 1
\end{bmatrix}
\]

\begin{figure}[thpb]
    \centering
    \includegraphics[width=0.8\linewidth]{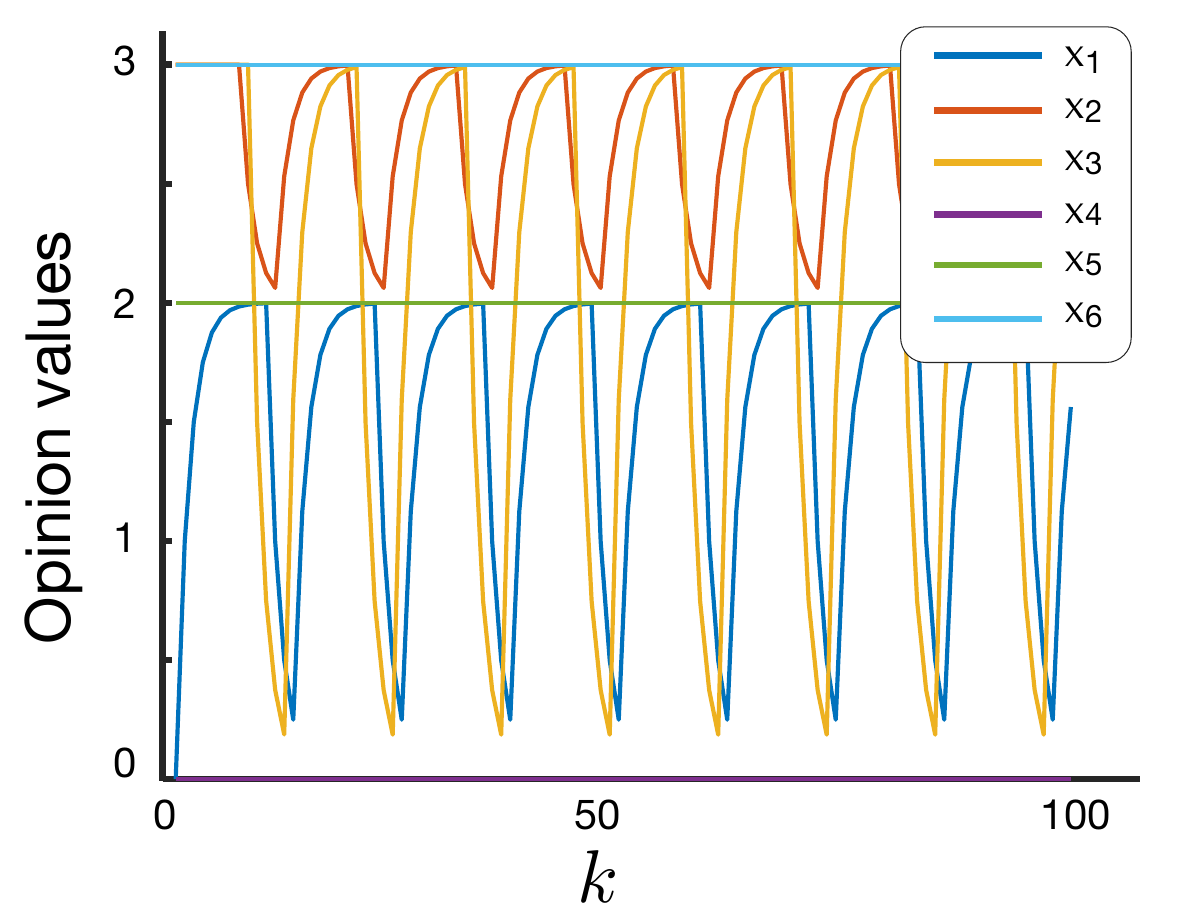}
    \caption{The system describled in Example~\ref{example:not-converge} oscillates at $\alpha=0.5,\beta=0.4$.   }
    \label{fig:counterexample}
\end{figure}

In this case, the system fails to converge at $\alpha=0.5,\beta=0.4$, as illustrated in Fig \ref{fig:counterexample}. Actually, for the influence matrix $W$ given in the example, adding a sufficiently small  $\epsilon$  to each element and then performing row normalization so that $\mathcal{G}(W)$ becomes a complete graph could still result in non-convergent oscillatory behaviors (could still induce persistent oscillations in opinions). This suggests that the decisive structure of interactions cannot be fully captured by the influence matrix $W$ alone when $\alpha<1$. 
\end{example}

While we provide a counterexample, characterizing necessary and sufficient conditions for convergence when $\alpha<1$ remains a challenging problem. Compared to the case $\alpha = 1$ (i.e., the weighted median rule), where the best response depends only on the relative ordering of opinions and the network weights, the case $\alpha < 1$ introduces additional complexity: the best response is determined not only by the ranking of opinions, but also by the specific values of opinions and weights. These dependencies significantly complicate theoretical analysis and make the convergence behavior more intricate. Developing a deeper understanding of this structure is an interesting and worthwhile direction for future research.

Now we provide a sufficient condition for convergence when $\alpha < 1$. If for all $1\leq i\leq n$, there exists a $1\leq j_i\leq n$ such that $w_{ij_i}>\frac{1}{2}$, then the system converges.

\begin{theorem}\label{thm:convergence}
Consider the best-response opinion dynamics given by Definition \ref{def:opinion-dynamic} on an influence network $\mathcal{G}(W)$ for \(\alpha \in (0,1)\). Suppose that for each agent \(i\in\{1,2,\dots,n\}\) there exists at least one index \(j\) such that 
\[
w_{ij} > 0.5.
\]
Then for any initial state \(x(0) \in \mathbb{R}^n\), the solution $x(k)$ converges to an equilibrium. 
\end{theorem}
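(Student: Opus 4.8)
The plan is to exploit the dominant-weight condition $w_{ij_i} > 1/2$ to show that each agent's best response is essentially pinned to the opinion of its dominant neighbor. By Lemma~\ref{lem:no-new-opinion}, $\BR^\alpha_i(x;W) \in \{x_1,\dots,x_n\}$, so the key step is to argue that among these candidate values, the minimizer of $u_i(z;x) = \sum_j w_{ij}|z-x_j|^\alpha$ must be $x_{j_i}$ itself (or at least an opinion currently equal to $x_{j_i}$). The intuition: since $\alpha \le 1$, the function $t \mapsto t^\alpha$ is subadditive, so for any other candidate value $x_k$ we have $|x_k - x_l|^\alpha \le |x_{j_i}-x_l|^\alpha + |x_k - x_{j_i}|^\alpha$ for every $l$; weighting by $w_{il}$ and summing, the ``cost of moving away from $x_{j_i}$'' paid on the dominant term $w_{ij_i}|z-x_{j_i}|^\alpha$ (which is $w_{ij_i}|x_k-x_{j_i}|^\alpha$ when $z=x_k$) outweighs any savings on the remaining terms of total weight $1 - w_{ij_i} < 1/2 < w_{ij_i}$. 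First I would make this comparison precise: show $u_i(x_k;x) - u_i(x_{j_i};x) \ge (w_{ij_i} - (1-w_{ij_i}))\,|x_k - x_{j_i}|^\alpha \ge 0$, with equality only if $x_k = x_{j_i}$, using subadditivity on each $|x_k - x_l|^\alpha$.

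Second, I would translate this into a statement about the dynamics. Writing $y_i(k) \coloneqq \BR^\alpha_i(x(k);W)$, the above shows $y_i(k)$ has the same value as $x_{j_i}(k)$ (the tie-breaking rules in Definition~\ref{def:br-operator} are consistent with this since $x_{j_i}$ is a strict minimizer unless other opinions coincide with it, in which case proximity/tie-breaking still pick a value equal to $x_{j_i}$). Hence the update becomes, componentwise,
\[
x_i(k+1) = \beta\, x_i(k) + (1-\beta)\, x_{j_i}(k),
\]
i.e.\ $x(k+1) = P(k)\, x(k)$ where $P(k) = \beta I + (1-\beta) S(k)$ and $S(k)$ is a row-stochastic $0$--$1$ selection matrix (one entry $1$ per row, at column $j_i$). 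The diagonal of $P(k)$ is uniformly $\ge \beta > 0$.

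Third, I would establish convergence of this (state-dependent) linear iteration. The cleanest route: observe that $\max_i x_i(k)$ is non-increasing and $\min_i x_i(k)$ is non-decreasing, since each $x_i(k+1)$ is a convex combination of $x_i(k)$ and $x_{j_i}(k)$, so the range $\|x(k)\|_{\mathrm{range}} \coloneqq \max_i x_i(k) - \min_i x_i(k)$ is non-increasing and hence converges. For actual convergence of $x(k)$ (not just the range), I would argue that each coordinate is monotone along the iteration or use a contraction/Cauchy argument: there are only finitely many possible selection maps $i \mapsto j_i$ consistent with the weights (in fact the condition may force a unique $j_i$ per $i$ when $w_{ij_i}>1/2$ strictly, since two weights exceeding $1/2$ is impossible), so $S(k) = S$ is in fact \emph{constant}, and the dynamics reduces to the fixed linear system $x(k+1) = (\beta I + (1-\beta)S)x(k)$. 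Convergence of powers of $\beta I + (1-\beta)S$ then follows from standard results: $S$ is row-stochastic with every row a standard basis vector, so $\beta I + (1-\beta)S$ is row-stochastic with positive diagonal, and its powers converge (each recurrent class of the induced chain is aperiodic because of the positive self-loop weight $\beta$).

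The main obstacle I anticipate is the first step's edge cases: handling ties when several opinions coincide, and making sure the strict inequality $w_{ij_i} > 1/2$ (rather than $\ge 1/2$) is genuinely needed and correctly used — if two neighbors were tied at weight exactly $1/2$ the selection would be ambiguous and the ``savings vs.\ cost'' inequality would only be non-strict, potentially allowing oscillation (this is presumably exactly what the counterexample in Example~\ref{example:not-converge} exploits, where no single weight exceeds $1/2$). A secondary subtlety is confirming that the proximity and deterministic tie-breaking rules of Definition~\ref{def:br-operator} never override the choice $x_{j_i}$; since $x_{j_i}$ is the \emph{unique} minimizing value, rule (1) alone forces $\BR^\alpha_i(x;W) = x_{j_i}$ and rules (2)--(3) are vacuous, so this should be clean once step one is nailed down.
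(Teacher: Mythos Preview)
Your proposal is correct and follows essentially the same route as the paper: both use subadditivity of $t\mapsto t^\alpha$ to derive the inequality $u_i(x_k;x)-u_i(x_{j_i};x)\ge (2w_{ij_i}-1)\,|x_k-x_{j_i}|^\alpha>0$, conclude that $\BR^\alpha_i(x;W)=x_{j_i}$ with $j_i$ fixed (uniqueness of $j_i$ being forced by row-stochasticity), and then reduce the dynamics to the linear iteration $x(k+1)=(\beta I+(1-\beta)S)x(k)$ whose convergence follows from the positive diagonal making each recurrent class aperiodic. The paper phrases the last step via Perron--Frobenius on the cycles of $\mathcal{G}(F)$, but this is the same argument you sketch.
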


\begin{proof}
Fix \(i\), and denote the corresponding index as \(j\) such that \(w_{ij}>0.5\). For any given \(x\in\mathbb{R}^n\), consider the cost function
\[
u_i(z;x)=\sum_{t=1}^n w_{it}|z-x|^\alpha,
\]
with \(0<\alpha<1\). We claim that \(x_j\) is the unique minimizer of \(u_i(z;x)\); that is, 
\[
\operatorname*{argmin}_{z\in\mathbb{R}} u_i(z;x)=\{x_j\}.
\]
For any \(k\neq j\), note that
\begin{align*}
u_i(x_j;x)-u_i(x_k;x)=&\sum_{t\neq j,k}w_{it}\Bigl(|x_j-x_t|^\alpha-|x_k-x_t|^\alpha\Bigr)\\
&+w_{ik}|x_j-x_k|^\alpha-w_{ij}|x_k-x_j|^\alpha.
\end{align*}
Since for any \(x_t\) we have
\[
\Bigl||x_j-x_t|^\alpha-|x_k-x_t|^\alpha\Bigr|\le|x_j-x_k|^\alpha,
\]
it follows that
\[
u_i(x_j;x)-u_i(x_k;x)\le \Bigl(\sum_{t\neq j,k}w_{it}+w_{ik}-w_{ij}\Bigr)|x_j-x_k|^\alpha.
\]
As \(\sum_{t\neq j,k}w_{it}+w_{ik}=1-w_{ij}\), we have
\[
u_i(x_j)-u_i(x_k)\le (1-2w_{ij})|x_j-x_k|^\alpha.
\]
Since \(w_{ij}>0.5\) implies \(1-2w_{ij}<0\), we conclude that \(u_i(x_j;x)<u_i(x_k;x)\) for all \(k\neq j\). Together with the fact that $\operatorname{BR}_i^\alpha (x,W)\in \{x_k, k=1,\cdots,n\}$ in Lemma~\ref{lem:no-new-opinion}, we can conclude that $\operatorname{BR}_i^\alpha (x,W)=x_j$.

Therefore, the system can be rewritten into the form of 
\[
x(k+1) = f(x(k)) = F\, x(k),
\]
where $F = \bigl[f_{ij}\bigr]_{n\times n}$ is a row-stochastic matrix satisfying $f_{ii} = \beta$ for all $1 \leq i \leq n$, and for each node $i$ there exists only one index $j$ such that $f_{ij} = 1 - \beta$.

Now consider the corresponding digraph $\mathcal{G}(F)$. Notice that each node has only one out-link apart from its self-loop. In this case, each node is either part of a cycle or its only out-link should direct towards a node that belongs to a cycle. According to the Perron-Frobenius theorem, since the adjacency matrix of a cycle with self-loops in graph $\mathcal{G}(F)$ is primitive and row-stochastic, therefore it should have a simple eigenvalue 1 and the norm of all other eigenvalues should be strictly less than 1\cite{FB:21}. Consequently, the opinions of nodes belong to the same cycle converge to a consensus equilibrium. As for the other nodes, they are influenced only by themselves and by a particular node in some cycle, thus their opinions will converge to the consensus opinion of that cycle. The convergence of the system is therefore guaranteed.
\end{proof}

One immediate conclusion from the above result is that under the conditions of Theorem \ref{thm:convergence}, the system $x(k+1) = F\, x(k)$ achieves consensus for any initial state if and only if there exists only one strongly connected component with more than one node in graph $\mathcal{G}(F)$, that is, there exists a unique sequence of distinct nodes $i_1,i_2,\dots,i_l\in\{1,2,\dots,n\}$ such that $w_{i_ki_{k+1}}>\frac{1}{2},\forall 1\leq k\leq l-1$ and $w_{i_li_1}>\frac{1}{2}$.

\section{Simulation Results}

In this section, we provide some simulation results of BROD. Since the case $\alpha<1$ exhibits richer dynamical behaviors, we mainly conduct simulations and analyze the convergence and consensus properties of the system when $\alpha<1$.

We conduct simulations on small-world networks consist of 100 nodes, which can be generated by the classic Watts–Strogatz model\cite{DJW-SHS:98}. For different values of exponent parameter $\alpha \in (0,1)$, inertia coefficient $\beta$, and rewiring probability $p$ of the Watts–Strogatz model, we conduct 100 simulation runs for each parameter setting. We compute the proportion of runs in which the system converges or achieves consensus, as well as the standard deviation of nodes' opinions when the system converges. We refer to the standard deviation of nodes' opinions as the ``opinion diversity". 

\begin{figure}[thpb]
    \centering
    \includegraphics[width=0.9\linewidth]{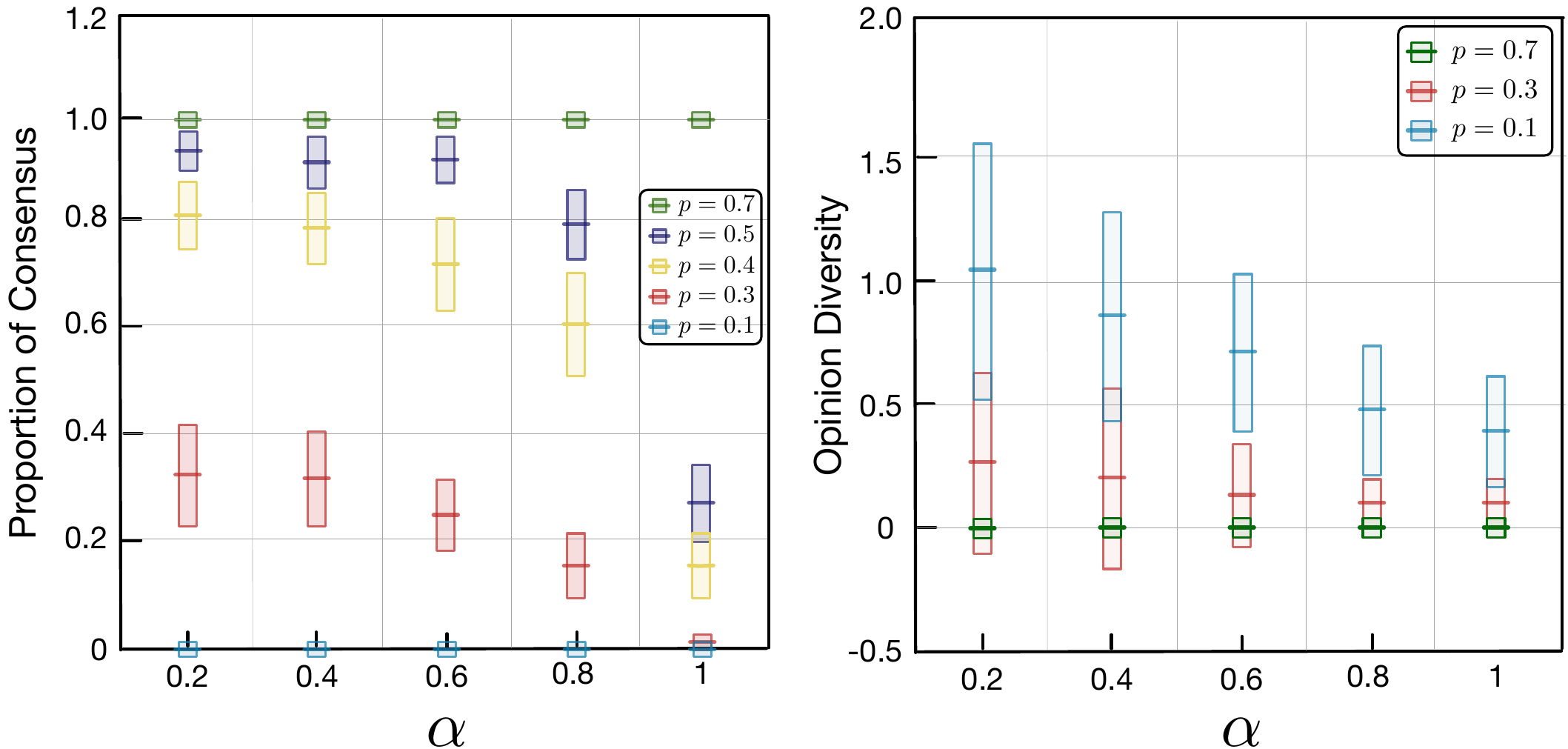}
    \caption{Empirical analysis of simulation results for $\alpha$. Panel (a) shows the relationship between Proportion of Consensus and $\alpha$. The horizontal colored bars indicate the consensus probability, while the vertical ranges of the colored rectangles are the associated 95\% confidence intervals, computed by the \textit{binomial distribution method} \cite{MB:15}. Panel (b) shows the relationship between Opinion Diversity and $\alpha$. The horizontal bars indicate the mean of opinion diversity, while the vertical ranges of the rectangles indicate the standard deviation of opinion diversity. For different values of inertia coefficient $\beta$, the results are qualitatively similar.}
    \label{fig:alpha}
\end{figure}

Simulation results indicate that BROD converges in most cases. Figure \ref{fig:alpha} shows how the consensus probability and opinion diversity vary with $\alpha$, respectively, where different colors represent results under different rewiring probability $p$. It can be seen that when the rewiring probability $p$ is either very large or very small, the probability of the system reaching consensus is weakly correlated with $\alpha$ as it is mainly determined by $p$. While for moderate values of $p$, the consensus probability decreases as $\alpha$ increases. This implies that when the network is highly clustered (small $p$) or highly random (large $p$), $\alpha$ has little influence on whether BROD would achieve consensus. However, when the network structure maintains a balance between clustering and randomness, the influence of $\alpha$ on the consensus property of the system becomes more significant. Regarding opinion diversity, it can also be concluded that when the rewiring probability $p$ takes a relatively small value, the opinion diversity decreases as $\alpha$ increases. As $p$ increases, the probability of the system achieving consensus also increases, further reducing the opinion diversity and therefore weakening the significance of its dependence on $\alpha$.

\begin{figure}[thpb]
    \centering
    \includegraphics[width=0.9\linewidth]{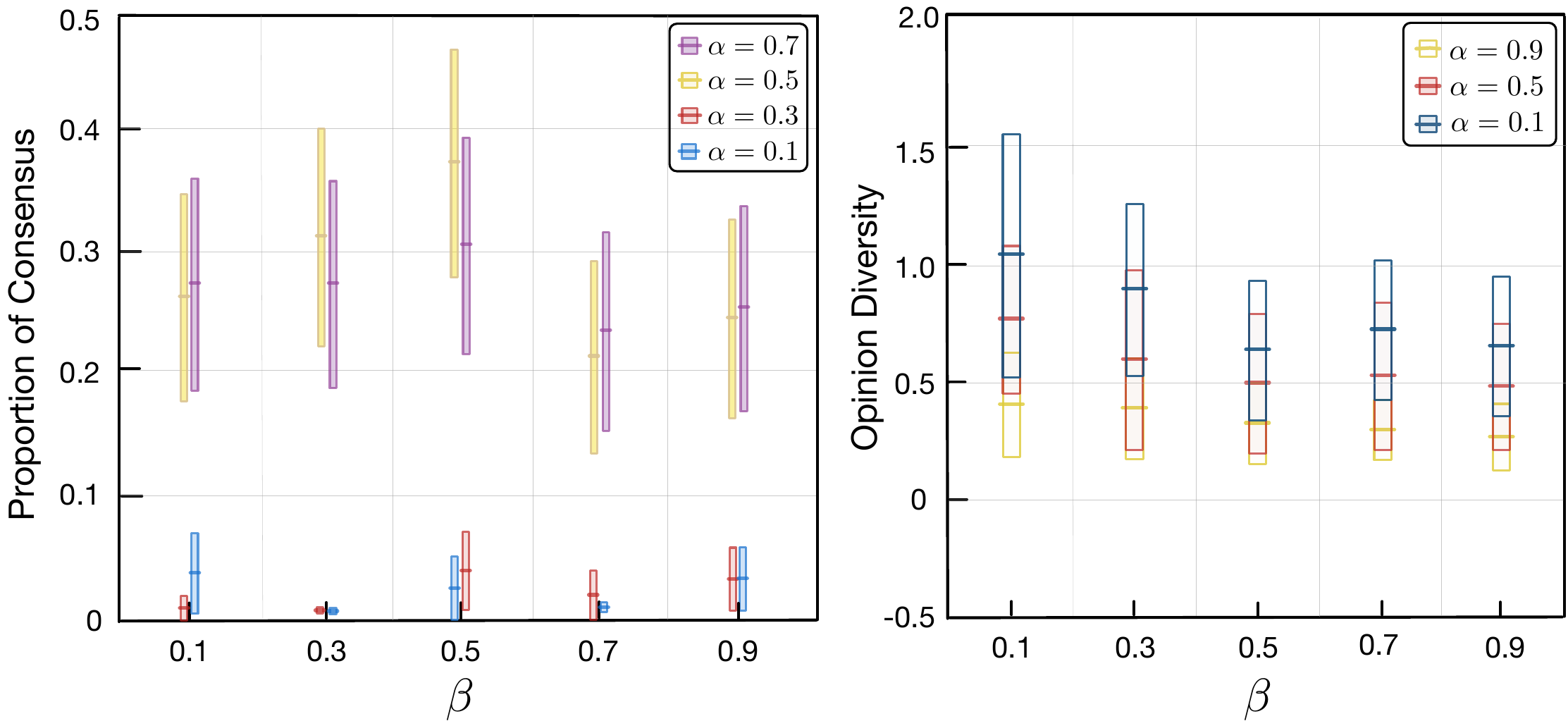}
    \caption{Relationship between Opinion Diversity and $\beta$. The results illustrated in this figure share a similar interpretation with those in Figure \ref{fig:alpha}. For different values of rewiring probability $p$, the results are qualitatively similar.}
    \label{fig:beta}
\end{figure}

Similarly, Figure \ref{fig:beta} shows how consensus probability and opinion diversity vary with $\beta$, respectively, where different colors here represent results under different $\alpha$. It can be concluded that $\beta$ has little influence on the probability of the system achieving consensus. While the opinion diversity slightly decreases as $\beta$ increases, though this change is not statistically significant. As a result, $\alpha$ plays a significant role in shaping the consensus behavior of BROD, while the influence of the inertia coefficient $\beta$ is relatively insignificant.

\section{Conclusions}
In this paper, we focus on a class of best response opinion dynamics parameterized by a tunable exponent \(\alpha\), which controls the relationship between opinion distance and opinion attractiveness. Under different $\alpha$, convergence and consensus properties of the system are studied via theoretical analysis and numerical simulations.    

For the case \(\alpha > 1\), we established the convergence of the proposed opinion dynamics and derived a necessary and sufficient condition for achieving consensus. In contrast, when \(\alpha < 1\), the behavior of the dynamics is considerably more intricate. We demonstrated through a counterexample that convergence is not universally guaranteed. Nevertheless, we provided a sufficient condition under which convergence and consensus can still be ensured.

To further explore the underlying behavior of opinion formation when \(\alpha < 1\), we performed extensive numerical simulations. The results show that the parameter $\alpha$ is an effective factor, dominating the formation of consensus and opinion diversity.  

In future work, it will be meaningful to extend this research framework by incorporating additional aspects, such as heterogeneous agent behaviors, dynamic network structures, or noise and external influences.

\bibliographystyle{IEEEtran}
\bibliography{alias,WM,Main,New,HY_Add}

\end{document}